\newcommand{\RR}{\mathbb{R}}
\newcommand{\ZZ}{\mathbb{Z}}
\newtheorem{theorem}{Theorem}[section]
\newtheorem{lemma}[theorem]{Lemma}
\newtheorem{proposition}[theorem]{Proposition}
\newtheorem{corollary}[theorem]{Corollary}
\newtheorem{example}[theorem]{Example}
\newtheorem{remark}[theorem]{Remark}
\newcommand{\spb}[1]{\smallskip}
\newcommand{\mpb}[1]{\medskip}
\newcommand{\bpb}[1]{\bigskip}
\renewcommand{\l}{\lambda}
\begin{document}
%\vspace*{2cm}

\DeclareGraphicsExtensions{.jpg,.pdf,.mps,.png}

\title{Upper and Lower Bounds for Generalized Wiener Indices on unicyclic graphs}

\author[\'{A}lvaro Mart\'{\i}nez-P\'erez]{\'{A}lvaro Mart\'{\i}nez-P\'erez$^{(1)}$}
\address{ Facultad CC. Sociales de Talavera,
Avda. Real F\'abrica de Seda, s/n. 45600 Talavera de la Reina, Toledo, Spain}
\email{alvaro.martinezperez@uclm.es}
\thanks{$^{(1)}$ Supported in part by a grant
from Ministerio de Ciencia, Innovaci\'on y Universidades (PGC2018-098321-B-I00), Spain.
}

\author[Jos\'e M. Rodr{\'\i}guez]{Jos\'e M. Rodr{\'\i}guez$^{(2)}$}
\address{Departamento de Matem\'aticas, Universidad Carlos III de Madrid,
Avenida de la Universidad 30, 28911 Legan\'es, Madrid, Spain}
\email{jomaro@math.uc3m.es}
\thanks{$^{(2)}$ Supported by a grant from Agencia Estatal de Investigaci\'on (PID2019-106433GB-I00 / AEI / 10.13039/501100011033), Spain,
and by the Madrid Government (Comunidad de Madrid-Spain) under the Multiannual Agreement with UC3M in the line of Excellence of University Professors (EPUC3M23), and in the context of the V PRICIT (Regional Programme of Research and Technological Innovation).
}

\date{\today}

\begin{abstract}
The aim of this paper is to obtain new inequalities for a large family of generalizations of the Wiener Index
and to characterize the set of extremal graphs with respect to them. Our main results provide upper and lower bounds for these topological indices on unicyclic graphs.
\end{abstract}

\maketitle{}

%\centerline{{\bf Topological Indices on Unicyclic graphs II 12.tex}}

%{\it Keywords:  Geometric-arithmetic index, Graph invariant, Vertex-degree-based graph invariant, Topological index.}
%
%{\it 2010 AMS Subject Classification numbers: 05C07, 92E10.} %Primary ; Secondary .

\section{Introduction}

A topological descriptor is a single number that it is computed on the molecular graph of a compound and represents some chemical structure in terms of this graph. These descriptors are common and relevant in the field of mathematical chemistry and especially in the QSPR/QSAR investigations.

When a topological descriptor correlates with a molecular property of certain chemical compounds, then it is called a topological index. Thus, topological indices capture some essential physicochemical property into a single number and this can be used to analyze those properties. This can be very interesting for practitioners. For example, although only about 1000 benzenoid hydrocarbons are known, the number of theoretically
possible benzenoid hydrocarbons is huge. If we consider, for instance, the number of
possible benzenoid hydrocarbons with 35 benzene rings, it is $5.85\times 10^{21}$ \cite{NGJ}.
Therefore, a good model capable of predicting physico-chemical properties  of currently unknown species is extremely useful.

The main reason for the use of topological indices is to obtain predictions of some property of certain molecules (see, e.g., \cite{Estrada3}, \cite{Gutman7}, \cite{Gutman8}, \cite{RPL}).
Whenever a topological descriptor shows a better correlation for some property on some group of chemical compounds a new topological index appears. This way hundreds of topological indices have been defined and studied, starting with the seminal work by Wiener, \cite{Wi}, who found a correlation between his index and  paraffin boiling points.

The \emph{Wiener index} of $G$ is defined as
$$
W(G)=\sum_{\{ u,v\}\subseteq V(G)} d(u,v),
$$
where $\{ u,v\}$ runs over every pair of vertices in $G$.

Following this work, several versions of this index have been appearing since then, each one better adapted for its purpose. Let us mention, for example, the hyper-Wiener index, see \cite{KLG,R4} or the Harari index \cite{IBB,PNTM}.

%A single number, representing a chemical structure in graph-theoretical terms via the
%molecular graph, is called a topological descriptor and if it in addition correlates with
%a molecular property it is called topological index, which is used to understand physicochemical
%properties of chemical compounds.
%Topological indices are interesting since they capture some of the properties of a molecule in a single number.
%Hundreds of topological indices have been introduced and studied, starting with the
%seminal work by Wiener in which he used the sum of all shortest-path distances of
%a (molecular) graph for modeling physical properties of alkanes (see \cite{Wi}).

A natural problem in the study of topological indices is, given some fixed parameters, to find the graphs that minimize (or maximize) their value on a certain set of graphs satisfying the restrictions given by the parameters (see, e.g., \cite{BE1}, \cite{BE2}, \cite{Cruz}, \cite{Das4}, \cite{Du2}, \cite{Du3}, \cite{Edwards}, \cite{Gutman32}).

Herein, instead of doing this individually on each modified version of the Wiener index we consider a natural generalization of it which encompasses those mentioned above and try to work as generally as possible. In previous works we used the same strategy with trees, see \cite{MR2}. Now, we are considering unicyclic graphs. A \emph{unicyclic} graph is a graph containing exactly one cycle \cite[p.41]{Harary}.
It is well known that if $G$ is a unicyclic graph with $n$ vertices, then $G$ has $n$ edges.

The aim of this paper is to obtain new inequalities for a large family of topological indices restricted to unicyclic graphs, fixing or not the number of pendant vertices, and to characterize the extremal unicyclic graphs with respect to them. This problem, for other type of indices, was also addressed in \cite{MR3}.

Throughout this work, $G=(V (G),E (G))$ denotes a (non-oriented) finite connected simple (without multiple edges and loops) non-trivial ($E(G) \neq \emptyset$) graph.
Note that the connectivity of $G$ is not an important restriction, since every molecular graph is connected.

\section{Wiener index and its generalizations}

Motivated by the Wiener index, Randi\'c introduced in \cite{R4} an extension of the Wiener index for trees, and this has come
to be known as the \emph{hyper-Wiener index}. In \cite{KLG}, this extension was generalized to graphs as
\[
WW(G) = \frac12 \sum_{\{ u,v\}\subseteq V(G)} d(u,v) + \frac12 \sum_{\{ u,v\}\subseteq V(G)} d(u,v)^2.
\]
$WW(G)$ has been useful in correlations (see, e.g., \cite{Gutman} and the references therein).
For information about the hyper-Wiener index in mathematics see, e.g., \cite{Cash}, \cite{Gutman}, \cite{KZG}.

Also, it is interesting to generalize the Wiener index in the following way
\[
W^{\lambda}(G) = \sum_{\{ u,v\}\subseteq V(G)} d(u,v)^\l,
\]
with $\l \in \RR$. Obviously, if $\l=1$, then $W^\l$
coincides with the ordinary Wiener index $W$.
Note that $W^{-2}$ is the Harary index; $W^{-1}$ is the reciprocal Wiener index; the quantity $W^{2}$ is closely related to the hyper-Wiener index,
since $WW = (W^1 + W^2)/2$. Another topological index, proposed in \cite{TSZ} is expressed in terms of $W^1$, $W^2$ and $W^3$ as $(2W^1+3W^2+W^3)/6$.
See \cite{KG} for more connections of the same kind.

Three different variants of the $q$-Wiener index ($q>0, \,q \neq 1$) were defined in \cite{ZGLM} as
$$
\begin{aligned}
W_1(G,q) & = \sum_{\{ u,v\}\subseteq V(G)} [d(u,v)]_q,
\\
W_2(G,q) & = \sum_{\{ u,v\}\subseteq V(G)} [d(u,v)]_q \, q^{L-d(u,v)},
\\
W_3(G,q) & = \sum_{\{ u,v\}\subseteq V(G)} [d(u,v)]_q \, q^{d(u,v)},
\end{aligned}
$$
where $L$ is the diameter of $G$, and
$$
[k]_{q} = \frac{1-q^k}{1-q} = 1+q+q^2+ \cdots + q^{k-1}.
$$
Since $\lim_{q\to 1}[k]_{q}=k$, we have
$$
\lim_{q\to 1} W_1(G,q)
= \lim_{q\to 1} W_2(G,q)
= \lim_{q\to 1} W_3(G,q)
= W(G).
$$

Given any function $h:\ZZ^+ \rightarrow \RR$, the $h$-\emph{Wiener index} of $G$ is defined as
\[
%\begin{aligned}
W_{h}(G)
=\sum_{\{ u,v\}\subseteq V(G)} h\big(d(u,v)\big),
%\\
%W_{g,h}(G)
%& =\sum_{\{ u,v\}\subseteq V(G)}g(d_u,d_v)\,h\big(d(u,v)\big).
%\end{aligned}
\]
This general approach allows to study in a unified way the previous indices.

\medskip

If $P_n$ is the path graph with $n$ vertices, then
\[W_h(P_n)=\sum_{1\leq i < j\leq n}h(j-i)=\sum_{k=1}^{n-1}(n-k)h(k).\]

If $C_n$ is the cycle graph with $n$ vertices, then
\begin{itemize}
	\item if $n$ is odd,
\[W_h(C_n)=\sum_{j=1}^{\frac{n-1}{2}}n h(j),\]
	\item if $n$ is even,
\[W_h(C_n)=\sum_{j=1}^{\frac{n}{2}-1} nh(j)+\frac{n}{2} h\Big(\frac{n}{2}\Big).\]
\end{itemize}

\medskip

Given $n\geq 4$, let $J_n$ be the graph obtained by identifying a vertex from a cycle $C_3$ and the vertex with degree $n-3$ of a star graph with $n-2$ vertices, $S_{n-2}$. Then,
\[W_h(J_n)=n h(1)+\frac{1}{2} n(n-3) h(2).\]

Given $3\leq r \leq n$ and a function $h:\ZZ^+ \rightarrow \RR$, let us define the function $F_h(r,n)$ as follows:
\begin{itemize}
	\item if $r$ is odd,
\begin{equation}\label{eq:odd}
F_h(r,n):=\sum_{j=1}^{\frac{r-1}{2}}rh(j)+\sum_{j=1}^{n-r}(n-r+1-j)h(j)+2\sum_{k=1}^{n-r}\sum_{j=1}^{\frac{r-1}{2}}h(k+j),
\end{equation}
	\item if $r$ is even,
\begin{equation}\label{eq:even}
F_h(r,n):=\sum_{j=1}^{\frac{r}{2}-1}rh(j)+\frac{r}{2}h\Big(\frac{r}{2}\Big)+
\sum_{j=1}^{n-r}(n-r+1-j)h(j)+2\sum_{k=1}^{n-r}\sum_{j=1}^{\frac{r}{2}-1}h(k+j)+\sum_{k=1}^{n-r}h\Big(\frac{r}{2}+k\Big).
\end{equation}
\end{itemize}

Let $G_{r,n}$ be the graph obtained by identifying a vertex from a cycle $C_r$ and a vertex with degree 1 of a path graph $P_{n-r+1}$.
Note that $G_{n,n}=C_n$.

As usual, if $a < b$, we use the convention
$$
\sum_{j=b}^{a}A(j) = 0.
$$

\begin{figure}[ht]
\centering
\includegraphics[scale=0.4]{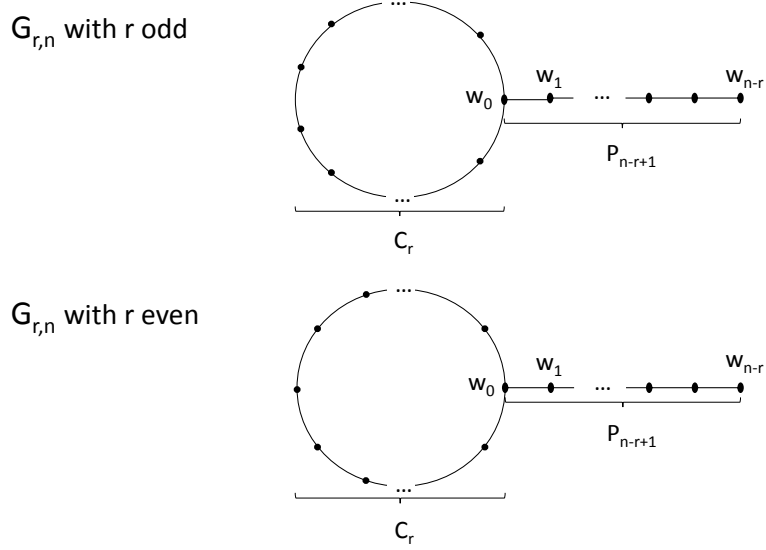}
\caption{The graph $G_{r,n}$ is obtained by identifying a vertex from a cycle $C_r$ and a vertex with degree 1 of a path graph $P_{n-r+1}$.}
\label{G_rn}
\end{figure}

\begin{proposition}\label{p:F} Given $3\leq r \leq n$ and a function $h:\ZZ^+ \rightarrow \RR$, then
\[W_h(G_{r,n})=F_h(r,n).\]
\end{proposition}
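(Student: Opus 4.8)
The plan is to split the pair set $\binom{V(G_{r,n})}{2}$ according to the natural structure of $G_{r,n}$ and to recognize two of the three resulting pieces as the cycle and path formulas already recorded just before the statement. Label the cycle vertices $v_0,v_1,\dots,v_{r-1}$ consecutively around $C_r$, with $v_0$ the vertex identified with the endpoint of the path, and label the remaining path vertices $w_1,\dots,w_{n-r}$ so that $w_k$ sits at distance $k$ from $v_0$ along the path. Since $w_1,\dots,w_{n-r}$ form a pendant branch attached to the cycle only at $v_0$, every geodesic joining a tail vertex to any other vertex must pass through $v_0$. This gives the three distance identities I will rely on: $d(v_i,v_j)$ equals the cycle distance in $C_r$; $d(w_i,w_j)=|i-j|$; and $d(v_j,w_k)=d_{C_r}(v_0,v_j)+k$, where $d_{C_r}(v_0,v_j)=\min(j,r-j)$.

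Next I would partition the unordered pairs into three classes: (a) both endpoints on the cycle $\{v_0,\dots,v_{r-1}\}$; (b) both endpoints in $\{v_0,w_1,\dots,w_{n-r}\}$, which is an isometrically embedded copy of $P_{n-r+1}$; and (c) one endpoint $v_j$ with $1\le j\le r-1$ and one endpoint $w_k$. These classes are pairwise disjoint and exhaust all pairs: since a pair consists of two distinct vertices, no pair lies in both (a) and (b) despite their sharing $v_0$, and every mixed pair either contains $v_0$ (hence belongs to (b)) or belongs to (c). The contribution of class (a) is exactly $W_h(C_r)$ and that of (b) is exactly $W_h(P_{n-r+1})$; by the formulas for $W_h(C_r)$ and $W_h(P_{n-r+1})$ recalled before the statement, these reproduce respectively the cycle term(s) of $F_h(r,n)$ (the first term for odd $r$, the first two for even $r$) and the path term $\sum_{j=1}^{n-r}(n-r+1-j)h(j)$.

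It remains to evaluate the cross term (c), namely $\sum_{k=1}^{n-r}\sum_{j=1}^{r-1}h\big(k+d_{C_r}(v_0,v_j)\big)$. Here I would use the symmetry of cycle distances seen from $v_0$: as $j$ runs over $1,\dots,r-1$, the value $\min(j,r-j)$ attains each of $1,\dots,\frac{r-1}{2}$ exactly twice when $r$ is odd, while for even $r$ it attains each of $1,\dots,\frac{r}{2}-1$ twice and the antipodal value $\frac{r}{2}$ exactly once. Substituting this into the inner sum turns (c) into $2\sum_{k=1}^{n-r}\sum_{j=1}^{(r-1)/2}h(k+j)$ in the odd case and into $2\sum_{k=1}^{n-r}\sum_{j=1}^{r/2-1}h(k+j)+\sum_{k=1}^{n-r}h\big(\frac{r}{2}+k\big)$ in the even case, which are precisely the remaining terms of $F_h(r,n)$ in \eqref{eq:odd} and \eqref{eq:even}. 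Adding the three contributions yields the claim.

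The only delicate bookkeeping, and the main (mild) obstacle, is ensuring the shared vertex $v_0$ is counted once and only once: it must be placed in the path block (b) so that the pairs $\{v_0,w_k\}$ are accounted for there, and correspondingly excluded from the cross block (c) by restricting to $j\ge 1$. The odd/even split is governed entirely by the same parity distinction already built into the formula $W_h(C_r)$, so beyond the antipodal-vertex observation no further case analysis is required.
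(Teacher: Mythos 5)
Your proposal is correct and follows essentially the same route as the paper: both decompose $W_h(G_{r,n})$ as $W_h(C_r)+W_h(P_{n-r+1})$ plus a cross term over pairs with one vertex on the cycle (other than the identified vertex) and one on the pendant path, and both evaluate the cross term using the fact that from the identified vertex the cycle distances $1,\dots,\lfloor r/2\rfloor$ occur each twice (with the antipodal distance $r/2$ occurring once when $r$ is even). Your write-up just makes explicit some bookkeeping (the cut-vertex/geodesic argument and the placement of the shared vertex in the path block) that the paper leaves implicit.
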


\begin{proof} Consider the graph $G_{r,n}$, let $w_0$ be the identified vertex which belongs to the cycle and the path and let $w_1,w_2,\dots,w_{n-r}$ be the vertices in the path $P_{n-r+1}$ with $d(w_0,w_k)=k$. See Figure \ref{G_rn}.

Then,
 $$W_h(G_{r,n})=W_h(C_r)+W_h(P_{n-r+1})+\sum_{w_0\neq v\in C_r, \, w_0\neq w_i\in P_{n-r+1}}  h(d(v,w_i)).$$

Suppose $r$ is odd. Then, notice that for every $1\leq k\leq n-r$,
\[\sum_{w_0\neq v\in C_r}h(d(v,w_k))
=2\sum_{j=1}^{\frac{r-1}{2}} h(j+k).\]
Thus,
$$W_h(G_{r,n})=\sum_{j=1}^{\frac{r-1}{2}}rh(j)+\sum_{j=1}^{n-r}(n-r+1-j)h(j)+2\sum_{k=1}^{n-r}\sum_{j=1}^{\frac{r-1}{2}}h(k+j).$$

In particular,
$$F_h(3,n)=3h(1)+\sum_{j=1}^{n-3}(n-2-j)h(j)+2\sum_{k=1}^{n-3}h(k+1)=
nh(1)+\sum_{j=2}^{n-2}(n-j)h(j).$$

Suppose $r$ is even. Then, notice that for every $1\leq k\leq n-r$,
\[\sum_{w_0\neq v\in C_r}h(d(v,w_k))
=2\sum_{j=1}^{\frac{r}{2}-1} h(j+k)+h\Big(\frac{r}{2}+k\Big).\]
Thus,
$$W_h(G_{r,n})=\sum_{j=1}^{\frac{r}{2}-1}rh(j)+\frac{r}{2}  h\Big(\frac{r}{2}\Big)+
\sum_{j=1}^{n-r}(n-r+1-j)h(j)+2\sum_{k=1}^{n-r}\sum_{j=1}^{\frac{r}{2}-1} h(k+j)+\sum_{k=1}^{n-r} h\Big(\frac{r}{2}+k\Big).$$
\end{proof}
%\begin{itemize}
%	\item if $r$ is odd,
%\[W_h(G_{r,n})=r(h(1)+\cdots + h(\left\lceil \frac{r}{2}\right\rceil))+
%\sum_{k=1}^{n-1}(n-r+1-k)h(k)+2\sum_{k=1}^{n-r}\sum_{j=1}^{(r-1)/2}h(k+j)\],
%	\item if $n$ is even,
%\[W_h(G_{r,n})=r(h(1)+\cdots + h(\left\lceil \frac{r}{2}\right\rceil))+\frac{r}{2}h(\frac{r}{2})+
%\sum_{k=1}^{n-1}(n-r+1-k)h(k)+2\sum_{k=1}^{n-r}\sum_{j=1}^{(r/2)-1}h(k+j)+\sum_{k=1}^{n-r}h(\frac{%r}{2}+k).\]
%\end{itemize}

\begin{remark} \label{r:pares}
Since any graph with $n$ vertices has $\frac12 n(n-1)$ pairs of vertices, we have
$$
\frac12 \,n(n-1)
= n+\sum_{j=2}^{n-2}(n-j)
=\sum_{j=1}^{\frac{r-1}{2}}r +\sum_{j=1}^{n-r}(n-r+1-j) +2\sum_{k=1}^{n-r}\sum_{j=1}^{\frac{r-1}{2}} 1
$$
if $r$ is odd, and
$$
\frac12 \,n(n-1)
= n+\sum_{j=2}^{n-2}(n-j)
=\sum_{j=1}^{\frac{r}{2}-1}r +\frac{r}{2} +
\sum_{j=1}^{n-r}(n-r+1-j) +2\sum_{k=1}^{n-r}\sum_{j=1}^{\frac{r}{2}-1} 1 +\sum_{k=1}^{n-r} 1
$$
if $r$ is even.
\end{remark}

\begin{lemma}\label{l:max_even} Given $4\leq r \leq n$ with $r$ even and a strictly increasing function $h:\ZZ^+ \rightarrow \RR$, then $F_h(3,n)>F_h(r,n)$. If $h$ is a strictly decreasing function, then $F_h(3,n)<F_h(r,n)$.
\end{lemma}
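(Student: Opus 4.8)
The plan is to regard both sides as sums over vertex pairs weighted by $h$ of the distance, and then to compare the two distance distributions. Write $F_h(3,n)=\sum_{t\ge 1}a_t\,h(t)$ and $F_h(r,n)=\sum_{t\ge 1}b_t\,h(t)$, where $a_t$ (resp.\ $b_t$) is the number of pairs of vertices at distance exactly $t$ in $G_{3,n}$ (resp.\ $G_{r,n}$). From the value $F_h(3,n)=nh(1)+\sum_{j=2}^{n-2}(n-j)h(j)$ obtained in the proof of Proposition \ref{p:F} one reads off $a_1=n$, $a_t=n-t$ for $2\le t\le n-2$, and $a_t=0$ otherwise; the analogous $b_t$ come from splitting the pairs of $G_{r,n}$, exactly as in that proof, into cycle pairs, pendant-path pairs, and cross pairs $\{v,w_i\}$ with $v\in C_r$ and $w_i$ on the path. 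By Remark \ref{r:pares} both graphs have $\binom n2$ pairs, so $\sum_{t}(a_t-b_t)=0$.

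First I would pass to cumulative counts. Set $P(t)=\sum_{s=1}^{t}(a_s-b_s)=N_3(t)-N_r(t)$, the difference between the number of pairs at distance at most $t$ in $G_{3,n}$ and in $G_{r,n}$; note $P(t)=0$ once $t$ exceeds both diameters. Summation by parts then gives the identity $F_h(3,n)-F_h(r,n)=-\sum_{t\ge 1}P(t)\big(h(t+1)-h(t)\big)$, which cleanly separates the role of $h$ (through the increments $h(t+1)-h(t)$, positive for increasing $h$ and negative for decreasing $h$) from the purely combinatorial quantities $P(t)$, and in particular handles the two cases of the lemma simultaneously.

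The crux, and the step I expect to be the main obstacle, is to prove that $P(t)\le 0$ for every $t$, that is, that at each threshold $G_{3,n}$ has at least as many far-apart pairs as $G_{r,n}$. Here I would use the explicit value $N_3(t)=tn-\tfrac{t(t+1)}{2}+1$ for $1\le t\le n-2$ together with the closed form of $N_r(t)$ coming from the three families above: the cycle contributes $rt$ pairs for $t<r/2$ and all $\binom r2$ for $t\ge r/2$, the pendant path of length $n-r$ contributes the path counts, and the cross pairs contribute a sum of ball-sizes of the cycle. Comparing these piecewise-quadratic expressions in the regimes delimited by the breakpoints $t=r/2$ and $t=n-r$ should yield $N_3(t)\le N_r(t)$; geometrically, the long pendant path of $G_{3,n}$ stretches vertices apart while the larger cycle of $G_{r,n}$ keeps its vertices within distance $r/2$, so every distance ball of $G_{r,n}$ captures at least as many pairs. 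This regime bookkeeping is elementary but must be carried out carefully at each breakpoint.

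Finally I would conclude from the identity of the second paragraph. If $h$ is strictly increasing, every increment $h(t+1)-h(t)$ is positive and every $-P(t)$ is nonnegative, so $F_h(3,n)-F_h(r,n)\ge 0$; the inequality is strict because $P(t_0)<0$ for some $t_0$. Indeed, for $r\ge 6$ the diameter $n-2$ of $G_{3,n}$ already exceeds the diameter $n-r/2$ of $G_{r,n}$, forcing $P(t_0)<0$ at $t_0=n-3$, and for $r=4$ with $n\ge 5$ the middle thresholds give $P(t_0)<0$; the only case in which all $P(t)$ vanish is the degenerate $(r,n)=(4,4)$, where the two distance distributions coincide. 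If instead $h$ is strictly decreasing, each increment is negative while the very same inequalities $P(t)\le 0$ hold, so the sign reverses and $F_h(3,n)<F_h(r,n)$. This establishes both claims of the lemma.
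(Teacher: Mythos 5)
Your framework is sound as far as it goes: the identity $F_h(3,n)-F_h(r,n)=-\sum_{t\ge 1}P(t)\bigl(h(t+1)-h(t)\bigr)$ obtained by summation by parts is correct, it does treat the increasing and decreasing cases of the lemma simultaneously, the coefficients $a_t$ and the cumulative count $N_3(t)=tn-\tfrac{t(t+1)}{2}+1$ are right, and your strictness bookkeeping (at $t_0=n-3$ when $r\ge 6$, at $t_0=2$ when $r=4$ and $n\ge 5$) checks out. You even catch something the paper overlooks: at $(r,n)=(4,4)$ the two distance distributions coincide, so $F_h(3,4)=F_h(4,4)$ for \emph{every} $h$ and the strict inequality asserted in the lemma actually fails at that boundary point (the paper's Case 1 silently assumes $r<n$ when it splits $\sum_{j=2}^{n-2}(n-j)h(j)$ at $j=n-r$); since the theorem only invokes the lemma for $n\ge 6$, this is harmless downstream, but your exclusion of $(4,4)$ is correct and should be stated as a correction to the lemma, not buried in the strictness discussion.

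The genuine gap is exactly where you say you expect the main obstacle: you never prove $P(t)\le 0$, i.e., $N_3(t)\le N_r(t)$ for all $t$, and this claim cannot be deferred, because it \emph{is} the lemma. Indeed, taking $h$ to be a strictly increasing perturbation of the step function $s\mapsto[s>t_0]$ shows that the validity of $F_h(3,n)\ge F_h(r,n)$ for all strictly increasing $h$ is equivalent to $P(t)\le 0$ for all $t$; so your argument reduces the lemma to an equivalent restatement and then replaces its proof by a heuristic. Moreover, the natural way to make that heuristic rigorous fails: there is no bijection $V(G_{3,n})\to V(G_{r,n})$ that never increases distances, since the triangle of $G_{3,n}$ would have to map onto three mutually adjacent vertices, and $G_{r,n}$ is triangle-free for $r\ge 4$. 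Dominance holds only in the aggregate, not pair by pair under any vertex coupling, so the piecewise comparison of $N_3(t)$ with $N_r(t)$ across the breakpoints $t=r/2$ and $t=n-r$ (and near the two diameters) must actually be carried out; that computation has essentially the same complexity as the paper's proof, which splits into the three regimes $n-r<\tfrac{r}{2}-1$, $n-r=\tfrac{r}{2}-1$, $n-r>\tfrac{r}{2}-1$ and compares coefficients of $h(j)$ directly using Remark \ref{r:pares} and monotonicity. Until that verification is written out, what you have is a clean and potentially advantageous reformulation (it isolates the combinatorics from $h$ and pins down the equality case), but not a proof.
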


\begin{proof}
First of all, note that the second statement is a consequence of the first one, if we consider the function $-h$.
Hence, we can assume that $h$ is a strictly increasing function.
We have
$$
\begin{aligned}
& F_h(3,n)>F_h(r,n)
\\
& \Leftrightarrow \quad \sum_{j=2}^{n-r}(n-j)h(j)+\sum_{j=n-r+1}^{n-2}(n-j)h(j)
\\
& \qquad \quad > \sum_{j=2}^{\frac{r}{2}-1}rh(j)+\frac{r}{2}h\Big(\frac{r}{2}\Big)+\sum_{j=\frac{r}{2}+1}^{n-\frac{r}{2}}h(j)+\sum_{j=2}^{n-r}(n-r+1-j)h(j)+2\sum_{k=1}^{n-r}\sum_{j=1}^{\frac{r}{2}-1}h(k+j).
\end{aligned}
$$

Case 1. Suppose $n-r<\frac{r}{2}-1$. Then,
$$
\begin{aligned}
& F_h(3,n) > F_h(r,n) \\
& \Leftrightarrow \quad \sum_{j=n-r+1}^{n-2}(n-j)h(j)
\\
& \qquad \quad > \sum_{j=2}^{n-r}h(j)+\sum_{j=n-r+1}^{\frac{r}{2}-1}rh(j)+\frac{r}{2}h\Big(\frac{r}{2}\Big)+ \sum_{j=\frac{r}{2}+1}^{n-\frac{r}{2}}h(j)+ 2\sum_{k=1}^{n-r}\sum_{j=1}^{\frac{r}{2}-1}h(k+j)
\\
& \Leftrightarrow \quad (n-r)h\Big(\frac{r}{2}\Big)+\sum_{j=\frac{r}{2}+1}^{n-\frac{r}{2}}(n-j-1)h(j)+\sum_{j=n-\frac{r}{2}+1}^{n-2}(n-j)h(j)
\\
& \qquad \quad > \sum_{j=2}^{n-r}h(j)+\sum_{j=n-r+1}^{\frac{r}{2}-1}(r-n+j)h(j)+2\sum_{k=1}^{n-r}\sum_{j=1}^{\frac{r}{2}-1}h(k+j).
\end{aligned}
$$

Since $n-r<\frac{r}{2}-1$,
 \[
\begin{aligned}
2\sum_{k=1}^{n-r}\sum_{j=1}^{\frac{r}{2}-1}h(k+j)
& =
2h(2)+4h(3)+\cdots+2(n-r)h(n-r+1)+\cdots + 2(n-r)h\Big(\frac{r}{2}\Big)
\\
& \quad +2(n-r-1)h\Big(\frac{r}{2}+1\Big)+\cdots + 2h\Big(n-\frac{r}{2}-1\Big).
\end{aligned}
\]
Let
\[M=(n-r-1)+\frac12\Big(\frac{3r}{2}-n-1\Big)\Big(\frac{3r}{2}-n\Big)+(n-r)(2r-n-1).\]
Then, since $h$ is strictly increasing, it follows that
$$
\begin{aligned}
\sum_{j=2}^{n-r}h(j) \, + & \!\sum_{j=n-r+1}^{\frac{r}{2}-1}(r-n+j)h(j)+2\sum_{k=1}^{n-r}\sum_{j=1}^{\frac{r}{2}-1}h(k+j)
\\
& < M h\Big(\frac{r}{2}\Big)+2(n-r-1)h\Big(\frac{r}{2}+1\Big)+\cdots + 2h\Big(n-\frac{r}{2}-1\Big)
\\
& = M h\Big(\frac{r}{2}\Big)+\sum_{j=\frac{r}{2}+1}^{n-\frac{r}{2}-1}2\Big(n-\frac{r}{2}-j\Big)h(j).
\end{aligned}
$$
Therefore, since in
\[(n-r)h\Big(\frac{r}{2}\Big)+\sum_{j=\frac{r}{2}+1}^{n-\frac{r}{2}}(n-j-1)h(j)+\sum_{j=n-\frac{r}{2}+1}^{n-2}(n-j)h(j)\]
the function $h$ is evaluated always in numbers greater or equal than $\frac{r}{2}$
and Remark \ref{r:pares} gives
$$
(n-r)+\sum_{j=\frac{r}{2}+1}^{n-\frac{r}{2}}(n-j-1) +\sum_{j=n-\frac{r}{2}+1}^{n-2}(n-j)
= \sum_{j=2}^{n-r} 1 +\sum_{j=n-r+1}^{\frac{r}{2}-1}(r-n+j)+2\sum_{k=1}^{n-r}\sum_{j=1}^{\frac{r}{2}-1} 1,
$$
it suffices to check that
\[\sum_{j=\frac{r}{2}+1}^{n-\frac{r}{2}-1}2\Big(n-\frac{r}{2}-j\Big)h(j)<\sum_{j=\frac{r}{2}+1}^{n-\frac{r}{2}-1}(n-j-1)h(j).\]
Hence, it suffices to show for every $\frac{r}{2}+1 \le j \le n-\frac{r}{2}-1$ that
$$
2n-r-2j<n-j-1,
$$
and this is equivalent to $n-r<j-1$, which follows from
$$
j-1\geq \frac{r}{2}> \frac{r}{2}-1 > n-r.
$$
This finishes the proof of Case 1.

\smallskip

Case 2. Suppose $\frac{r}{2}-1= n-r$. Then,
$$
\begin{aligned}
& F_h(3,n)>F_h(r,n)
\\
& \Leftrightarrow \quad \sum_{j=2}^{\frac{r}{2}-1}\Big(\frac{3r}{2}-1-j\Big)h(j)+\sum_{j=\frac{r}{2}}^{\frac{3r}{2}-3}\Big(\frac{3r}{2}-1-j\Big)h(j)
\\
& \qquad \quad >  \sum_{j=2}^{\frac{r}{2}-1}\Big(\frac{3r}{2}-j\Big)h(j)+\frac{r}{2}h\Big(\frac{r}{2}\Big)+\sum_{j=\frac{r}{2}+1}^{r-1}h(j)+2\sum_{k=1}^{\frac{r}{2}-1}\sum_{j=1}^{\frac{r}{2}-1}h(k+j)
\\
& \Leftrightarrow \quad \sum_{j=\frac{r}{2}}^{\frac{3r}{2}-3}\Big(\frac{3r}{2}-1-j\Big)h(j)
\\
& \qquad \quad >  \sum_{j=2}^{\frac{r}{2}-1}h(j)+\frac{r}{2}h\Big(\frac{r}{2}\Big)+\sum_{j=\frac{r}{2}+1}^{r-1}h(j)+
2\sum_{k=1}^{\frac{r}{2}-1}\sum_{j=1}^{\frac{r}{2}-1}h(k+j).
\end{aligned}
$$

Notice that
\[
\begin{aligned}
2\sum_{k=1}^{\frac{r}{2}-1}\sum_{j=1}^{\frac{r}{2}-1}h(k+j) & =
2h(2)+4h(3)+\cdots+2\Big(\frac{r}{2}-1\Big)h\Big(\frac{r}{2}\Big)+ 2\Big(\frac{r}{2}-2\Big)h\Big(\frac{r}{2}+1\Big)+
\cdots  + 2h(r-2)
\\
& = \sum_{j=2}^{\frac{r}{2}}2(j-1)h(j)+\sum_{j=\frac{r}{2}+1}^{r-2}2(r-1-j)h(j).
\end{aligned}
\]

Since $h$ is strictly increasing, it follows that

\[
\begin{aligned}
& \sum_{j=2}^{\frac{r}{2}-1}h(j)+\frac{r}{2}h\Big(\frac{r}{2}\Big)+\sum_{j=\frac{r}{2}+1}^{r-1}h(j)+
2\sum_{k=1}^{\frac{r}{2}-1}\sum_{j=1}^{\frac{r}{2}-1}h(k+j)
\\
& \qquad < \Big(\frac{r}{2}-1\Big)\Big(\frac{r}{2}+2\Big)h\Big(\frac{r}{2}\Big)+
\sum_{j=\frac{r}{2}+1}^{r-1}h(j) +\sum_{j=\frac{r}{2}+1}^{r-2}2(r-1-j)h(j).
\end{aligned}\]

Thus, it suffices to check that for every $\frac{r}{2}+1\leq j$ we have $2r-1-2j<\frac{3r}{2}-1-j$, which is direct.

\smallskip

Case 3. Suppose $\frac{r}{2}-1< n-r$. Then,
$$
\begin{aligned}
& F_h(3,n)>F_h(r,n)
\\
& \Leftrightarrow \quad \sum_{j=2}^{n-r}(r-1)h(j)+\sum_{j=n-r+1}^{n-2}(n-j)h(j)
\\
& \qquad \quad > \sum_{j=2}^{\frac{r}{2}-1}rh(j)+\frac{r}{2}h\Big(\frac{r}{2}\Big)+\sum_{j=\frac{r}{2}+1}^{n-\frac{r}{2}}h(j)+2\sum_{k=1}^{n-r}\sum_{j=1}^{\frac{r}{2}-1}h(k+j)
\\
& \Leftrightarrow \quad
\Big(\frac{r}{2}-1\Big)h\Big(\frac{r}{2}\Big)+\sum_{j=\frac{r}{2}+1}^{n-r}(r-2)h(j)+\sum_{j=n-r+1}^{n-2}(n-j)h(j)
\\
& \qquad \quad > \sum_{j=2}^{\frac{r}{2}-1}h(j)+\sum_{j=n-r+1}^{n-\frac{r}{2}}h(j)+2\sum_{k=1}^{n-r}\sum_{j=1}^{\frac{r}{2}-1}h(k+j).
\end{aligned}
$$

Since $\frac{r}{2}-1< n-r$,
\[
\begin{aligned}
2\sum_{k=1}^{n-r}\sum_{j=1}^{\frac{r}{2}-1}h(k+j)
& =
2h(2)+4h(3)+\cdots+2\Big(\frac{r}{2}-1\Big)h\Big(\frac{r}{2}\Big)+\cdots + 2\Big(\frac{r}{2}-1\Big)h(n-r+1)
\\
& \qquad +2\Big(\frac{r}{2}-2\Big)h(n-r+2)+\cdots + 2h\Big(n-\frac{r}{2}-1\Big)
\\
& = \sum_{j=2}^{\frac{r}{2}}2(j-1)h(j)+\sum_{j=\frac{r}{2}+1}^{n-r}(r-2)h(j)+\sum_{j=n-r+1}^{n-\frac{r}{2}-1}(2n-r-2j)h(j).
\end{aligned}
\]

Since $h$ is strictly increasing, it follows that
\[
\begin{aligned}
& \sum_{j=2}^{\frac{r}{2}-1}h(j)+\sum_{j=n-r+1}^{n-\frac{r}{2}}h(j)+2\sum_{k=1}^{n-r}\sum_{j=1}^{\frac{r}{2}-1}h(k+j)
%  \sum_{j=2}^{\frac{r}{2}-1}h(j)+2\sum_{k=1}^{n-r}\sum_{j=1}^{\frac{r}{2}-1}h(k+j)
\\
& <\Big(\frac{r^2}{4}-2\Big)h\Big(\frac{r}{2}\Big)+\sum_{j=n-r+1}^{n-\frac{r}{2}}h(j)+\sum_{j=\frac{r}{2}+1}^{n-r}(r-2)h(j)+\sum_{j=n-r+1}^{n-\frac{r}{2}-1}(2n-r-2j)h(j).
\end{aligned}
\]

Therefore, since in
\[  \Big(\frac{r}{2}-1\Big)h\Big(\frac{r}{2}\Big)+\sum_{j=\frac{r}{2}+1}^{n-r}(r-2)h(j)+\sum_{j=n-r+1}^{n-2}(n-j)h(j)\]
%\[\Big(\frac{r}{2}-1\Big)h\Big(\frac{r}{2}\Big)+\sum_{j=\frac{r}{2}+1}^{n-\frac{r}{2}}(r-2)h(j)+\sum_{j=n-\frac{r}{2}+1}^{n-r}(r-1)h(j)+\sum_{j=n-r+1}^{n-2}(n-j)h(j)\]
the function $h$ is evaluated always in numbers greater or equal than $\frac{r}{2}$, % and Remark  \ref{r:pares} gives
%$$
%\Big(\frac{r^2}{4}-2\Big)+\Big(n-\frac{3r}{2}\Big)(r-2)+\Big(\frac{r}{2}-1\Big)\frac{r}{2}
%= \Big(\frac{r}{2}-1\Big)+(n-r)(r-2)+\frac{r(r-1)}{2}+\frac{r(r-1)}{2}-1,
%$$
it suffices to check that  for every $n-r+1\leq j $ we have $2n-r-2j+1\leq n-j$, which is immediate.
%and
%$$n-r\leq j-1 \quad \Leftrightarrow \quad 2n-r-2j\leq r-2.$$
%Again, in the left side of the inequality, the function $h$ is evaluated always in numbers greater or equal than $\frac{r}{2}$. Therefore, it suffices to find for each summand $h(k+j)$in $2\sum_{k=1}^{n-r}\sum_{j=1}^{\frac{r}{2}-1}h(k+j)$ where $k+j>\frac{r}{2}$, a corresponding summand $h(i)$ in the left side such that $h(i)\geq h(j+k)$.
\end{proof}

\begin{lemma}\label{l:max_odd} Given $5\leq r \leq n$ with $r$ odd and a strictly increasing function $h:\ZZ^+ \rightarrow \RR$, then $F_h(3,n)>F_h(r,n)$. If $h$ is a strictly decreasing function, then $F_h(3,n)<F_h(r,n)$.
\end{lemma}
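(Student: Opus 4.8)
The plan is to follow the scheme of Lemma~\ref{l:max_even} essentially verbatim, since the odd case is structurally the same and in fact slightly simpler: the cycle $C_r$ with $r$ odd has no antipodal middle vertex, so the reduced inequality contains neither an isolated term $\frac{r}{2}h(\frac r2)$ nor the extra sum $\sum_k h(\frac r2+k)$ that complicate the even argument. As before, the decreasing case reduces to the increasing one by applying the statement to $-h$, so I assume $h$ strictly increasing throughout. I would then write out the comparison $F_h(3,n) > F_h(r,n)$ using the explicit value of $F_h(3,n)$ and the odd formula \eqref{eq:odd} of Proposition~\ref{p:F}, and cancel the terms common to both sides. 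The coefficient of $h(1)$ is $n$ on the $F_h(3,n)$ side, and $r+(n-r)=n$ on the $F_h(r,n)$ side (the $j=1$ contributions of the first and second sums), so the $h(1)$ terms cancel and the entire comparison takes place over arguments $j\ge 2$.

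Next I would expand the crossing double sum $2\sum_{k=1}^{n-r}\sum_{j=1}^{(r-1)/2} h(k+j)$ as a single sum in $m=k+j$. Its coefficients form the usual trapezoidal profile (rising by steps of $2$, then a possible plateau, then falling), and the shape of this profile is governed by the comparison between $n-r$ and $s:=\frac{r-1}{2}$, where $s$ is both the number of inner summands and the radius of $C_r$. This dictates the same three regimes as in Lemma~\ref{l:max_even}: Case~1, $n-r<s$; Case~2, $n-r=s$; Case~3, $n-r>s$. The largest argument occurring is $(n-r)+s=n-\frac{r+1}{2}$, and on the descending tail the coefficient of $h(m)$ equals $2(n-s-m)$.

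In each case the mechanism is the monotonicity-plus-counting trick of the even lemma. After the cancellations, all arguments appearing on the $F_h(3,n)$ side are $\ge s$, whereas the $F_h(r,n)$ side splits into a block of terms with arguments $\le s$ and a tail with arguments $>s$. Using that $h$ is strictly increasing, I bound every term of argument $\le s$ from above by a multiple of $h(s)$, collecting the total multiplicity into a single constant $M$, exactly as $M$ was assembled in the even case. The counting identity of Remark~\ref{r:pares} (its odd form, specialized to $h\equiv 1$ and restricted to $j\ge 2$) guarantees that this collapsed mass $M\,h(s)$ is exactly balanced by the portion of the $F_h(3,n)$ side evaluated at $s$; this is the step where Remark~\ref{r:pares} performs the essential accounting. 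What remains is then a termwise comparison of the two tails over $j>s$, which reduces to a linear inequality in $j$ of the form $2(n-s-j)<n-j$ (compare the even-case reduction $2(n-\frac r2-j)<n-j-1$). This is equivalent to $n-2s<j$, i.e. $n-r+1<j$, which holds on the tail because there $j\ge s+1$ and in each case $s\ge n-r$.

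I expect the only real obstacle to be bookkeeping: writing the trapezoidal coefficient profile correctly in each of the three regimes, fixing the precise summation endpoints after the $h(1)$ cancellation, and pinning down the exact form of the constant $M$ and of the final linear inequality so that the monotonicity bound is strict. None of these is conceptually difficult once the threshold is correctly identified as $s=\frac{r-1}{2}$; the genuine risk is an off-by-one error in the limits, which the consistency check furnished by Remark~\ref{r:pares} is precisely designed to detect.
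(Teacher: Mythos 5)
Your overall scheme is indeed the paper's scheme (reduce the decreasing case via $-h$, cancel the $h(1)$ terms, expand the double sum by diagonals, collapse the low-argument terms by monotonicity, and finish with a coefficient comparison), and it goes through in your Case 1, and in Case 2 up to a boundary point (at $j=s+1=n-r+1$ your tail inequality holds only with equality, so strictness must come from the collapse step, not from the tail). But your Case 3 contains a genuine error, not a bookkeeping slip. You justify the final termwise inequality $2(n-s-j)<n-j$, i.e.\ $j>n-r+1$, by asserting that ``in each case $s\ge n-r$''; that assertion contradicts the very definition of Case 3, which is $n-r>s$. And the inequality really does fail there: at the start of the tail, $j=s+1$, one has $2(n-s-j)=2(n-r)$ while $n-j=n-s-1$, and $2(n-r)>n-s-1$ precisely when $n-r>s$. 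So in the regime $n-r>s$ the comparison you propose is reversed at $j=s+1$, and no choice of the constant $M$ repairs a termwise argument.

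The failure is structural. When $n-r>s$ the diagonal expansion of $2\sum_{k=1}^{n-r}\sum_{j=1}^{s}h(k+j)$ is not descending on the whole tail: it has a plateau of height $2s=r-1$ on $[s+1,\,n-r+1]$, and on that same range the $F_h(3,n)$ side (after cancellation) has coefficient $(n-j)-(n-r+1-j)=r-1$ as well, not $n-j$. So on the plateau the two sides cancel exactly, and the strict surplus of the $F_h(3,n)$ side lives only in the descending range $[n-r+2,\,n-s-1]$, with coefficients $(n-j)-2(n-s-j)=j-(n-r+1)$, together with the top block $[n-s,\,n-2]$; its total mass is $s^2-1$, which equals the mass $\sum_{j=2}^{s}(2j-1)=s^2-1$ of the collapsed low block. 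Since the surplus sits at arguments $>s$ and the collapsed block at arguments $\le s$, strict monotonicity of $h$ finishes the proof---but this is an equal-mass transfer argument, not the termwise domination plus Remark~\ref{r:pares} accounting that you describe (note also that after cancellation the $F_h(3,n)$ side has no term at argument $s$ at all, so your ``exact balance at $s$'' cannot occur). Incidentally, the paper merges your Cases 2 and 3 into a single case, and its printed intermediate bound there (the one with constant $N'$ and coefficient $r-2$ on the plateau) is itself false as stated for large $n$: with $r=5$, $n=20$ and $h$ the identity, the left side equals $572$ and the claimed upper bound equals $461$. The correct repair of both your argument and the paper's is exactly the equal-mass transfer above; the regime $n-r>s$ is where the real content of the lemma lies, and your proposal as written does not prove it.
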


\begin{proof}
The second statement is a consequence of the first one, if we consider the function $-h$.
Therefore, we can assume that $h$ is a strictly increasing function.

Let us recall that $$F_h(3,n)=nh(1)+\sum_{j=2}^{n-2}(n-j)h(j).$$

Case 1. Suppose $n-r < \frac{r-1}{2}$. Then,
$$
\begin{aligned}
F_h(r,n)= & \sum_{j=1}^{n-r}(n+1-j)h(j)+\sum_{j=n-r+1}^{\frac{r-1}{2}}rh(j)+2\sum_{k=1}^{n-r}\sum_{j=1}^{\frac{r-1}{2}}h(k+j).
\end{aligned}
$$
Therefore,
$$
\begin{aligned}
& F_h(3,n)>F_h(r,n) \\
& \Leftrightarrow \quad \sum_{j=n-r+1}^{n-2}(n-j)h(j) >  \sum_{j=2}^{n-r}h(j)+\sum_{j=n-r+1}^{\frac{r-1}{2}}rh(j)+2\sum_{k=1}^{n-r}\sum_{j=1}^{\frac{r-1}{2}}h(k+j)
\\
& \Leftrightarrow \quad \sum_{j=\frac{r+1}{2}}^{n-2}(n-j)h(j) > \sum_{j=2}^{n-r}h(j)+
\sum_{j=n-r+1}^{\frac{r-1}{2}}(j-n+r)h(j)+2\sum_{k=1}^{n-r}\sum_{j=1}^{\frac{r-1}{2}}h(k+j).
\end{aligned}
$$

Since $n-r<\frac{r-1}{2}$,
\[
\begin{aligned}
2\sum_{k=1}^{n-r}\sum_{j=1}^{\frac{r-1}{2}}h(k+j) & =
2h(2)+\cdots+2(n-r)h(n-r+1)+\cdots
\\
& \quad + 2(n-r)h\Big(\frac{r+1}{2}\Big)+\cdots + 2h\Big(n-\frac{r}{2}-\frac{1}{2}\Big).
\end{aligned}
\]
Let
$$N=n-r-1+\frac12 \frac{(3r-2n-1)}{2}\frac{(3r-2n+1)}{2}+(n-r)(n-r+1)+(3r-2n-1)(n-r).$$
Therefore, since $h$ is strictly increasing,
\[
\begin{aligned}
& \sum_{j=2}^{n-r}h(j)+ \sum_{j=n-r+1}^{\frac{r-1}{2}}(j-n+r)h(j)+2\sum_{k=1}^{n-r}\sum_{j=1}^{\frac{r-1}{2}}h(k+j)
\\
& \qquad < N h\Big(\frac{r+1}{2}\Big)+\sum_{j=\frac{r+3}{2}}^{n-\frac{r}{2}-\frac12}2\Big(n-\frac{r}{2}+\frac12-j\Big)h(j).
\end{aligned}
\]
Thus, it suffices to check that for every $\frac{r+3}{2}\leq j \leq n-\frac{r}{2}-\frac12$ we have
\[2n-r+1-2j<n-j,\]
and this is equivalent to $n-r<j-1$,
which is immediate since \[j-1\geq \frac{r+1}{2}>n-r.\]

Case 2. Suppose $\frac{r-1}{2}\leq n-r$. Then,
$$
\begin{aligned}
F_h(r,n) & = \sum_{j=1}^{\frac{r-1}{2}}rh(j)+\sum_{j=1}^{\frac{r-1}{2}}(n-r+1-j)h(j)+\sum_{j=\frac{r+1}{2}}^{n-r}(n-r+1-j)h(j)+2\sum_{k=1}^{n-r}\sum_{j=1}^{\frac{r-1}{2}}h(k+j)
\\
& = \sum_{j=1}^{\frac{r-1}{2}} (n+1-j)h(j)+\sum_{j=\frac{r+1}{2}}^{n-r}(n-r+1-j)h(j)+2\sum_{k=1}^{n-r}\sum_{j=1}^{\frac{r-1}{2}}h(k+j).
\end{aligned}
$$
Therefore,
$$
\begin{aligned}
& F_h(3,n)>F_h(r,n)
\\
& \Leftrightarrow \quad \sum_{j=\frac{r+1}{2}}^{n-2}(n-j)h(j) > \sum_{j=2}^{\frac{r-1}{2}}h(j)+\sum_{j=\frac{r+1}{2}}^{n-r}(n-r+1-j)h(j)+2\sum_{k=1}^{n-r}\sum_{j=1}^{\frac{r-1}{2}}h(k+j)
\\
& \Leftrightarrow \quad \sum_{j=\frac{r+1}{2}}^{n-r}(r-1)h(j) +\sum_{j=n-r+1}^{n-2}(n-j)h(j)> \sum_{j=2}^{\frac{r-1}{2}}h(j)+ 2\sum_{k=1}^{n-r}\sum_{j=1}^{\frac{r-1}{2}}h(k+j).
\end{aligned}
$$

Since $\frac{r-1}{2}\leq n-r$,
\[
\begin{aligned}
2\sum_{k=1}^{n-r}\sum_{j=1}^{\frac{r-1}{2}}h(k+j)
& =2h(2)+\cdots+2\Big(\frac{r-1}{2}\Big)h\Big(\frac{r+1}{2}\Big)+\cdots
\\
& \quad + 2\Big(\frac{r-1}{2}\Big)h(n-r+1)+\cdots +2h\Big(n-\frac{r}{2}-\frac12\Big).
\end{aligned}
\]

Let $N'=\frac{r^2+2r-7}{4}$. Therefore, since $h$ is strictly increasing,
\[
\sum_{j=2}^{\frac{r-1}{2}}h(j)+ 2\sum_{k=1}^{n-r}\sum_{j=1}^{\frac{r-1}{2}}h(k+j)
< N'h\Big(\frac{r+1}{2}\Big)+\sum_{j=\frac{r+3}{2}}^{n-r}(r-2)h(j)+\sum_{j=n-r+1}^{n-\frac{r}{2}-\frac12}(2n-r+1-2j)h(j).
\]
Thus, it suffices to check that for every $\frac{r+3}{2}\leq j \leq n-r$ we have $r-2\leq r-1$, and for every $n-r+1\leq j \leq n-\frac{r}{2}-\frac12$ we have
\[
2n-r+1-2j\leq n-j \quad \Leftrightarrow \quad n-r+1\leq j,
\]
which is trivial.
\end{proof}

\begin{example} Notice that given $3< r\leq n$ and a strictly increasing function $h:\ZZ^+ \rightarrow \RR$, although $F_h(3,n)>F_h(r,n)$, $F_h(r,n)$ is not necessarily decreasing on $r$. For example, suppose $h$ is the identity map. Then $F_h(12,13)=13h(1)+14(h(2)+h(3)+h(4)+h(5))+8h(6)+h(7)$ and $F_h(11,13)=13h(1)+14h(2)+15(h(3)+h(4) +h(5))+4h(6)+2h(7)$. Thus, $F_h(12,13)-F_h(11,13)=4h(6)-h(3)-h(4)-h(5)-h(7)=24-3-4-5-7=5>0$
and $F_h(12,13)>F_h(11,13)$.
\end{example}

Let us recall the following definitions from \cite{ERY}.
A vertex of degree at least
three in a graph $G$ will be called a \emph{major} vertex of $G$.
Any end-vertex (a vertex of degree one) $u$ of $G$ is said to be a
\emph{terminal} vertex of a major vertex $v$ of $G$ if
$d_G(u,v) < d_G(u,w)$ for every other major vertex $w$ of $G$.
The \emph{terminal degree} of a major vertex $v$ is the
number of terminal vertices of $v$. A major vertex $v$ of $G$ is
called \emph{exterior} major vertex if it has positive terminal
degree.
Let $\mathcal{M}(G)$ be the set of exterior major vertices of
$G$ having terminal degree greater than one.

\smallskip

The following result is elementary.

\begin{lemma} \label{l:major}
If $T$ is a tree, then $\mathcal{M}(T)=\emptyset$ if and only if $T$ is a path graph.
\end{lemma}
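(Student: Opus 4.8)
The plan is to prove the two implications separately, handling the nontrivial direction by contraposition together with a short counting argument.

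The forward direction is immediate: if $T$ is a path graph then every vertex has degree at most two, so $T$ has no major vertices at all, and in particular $\mathcal{M}(T)=\emptyset$. For the converse I would argue by contraposition, proving that if $T$ is not a path graph then $\mathcal{M}(T)\neq\emptyset$. First I would record two structural facts. Since a connected graph all of whose vertices have degree at most two is a path or a cycle, and $T$ is a connected acyclic graph that is not a path, $T$ must contain at least one major vertex. Second, I would show that every end-vertex $u$ of $T$ has a \emph{unique} nearest major vertex: following the unique path leaving $u$, we traverse a (possibly empty) string of degree-two vertices until we first meet a vertex $v$ of degree at least three; because each intermediate vertex has degree exactly two, its only neighbours are its two neighbours on this pendant path, so the only exit from the path into the rest of $T$ is through $v$. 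Consequently every other major vertex $w$ satisfies $d_G(u,w)=d_G(u,v)+d_G(v,w)>d_G(u,v)$, whence $u$ is a terminal vertex of $v$ and of no other major vertex. This defines a map $\Phi$ from the set of end-vertices of $T$ to the set $M$ of major vertices, and by definition the terminal degree of a major vertex $v$ equals the size of the fibre $\Phi^{-1}(v)$.

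The key step is then a counting argument. Suppose, for contradiction, that every major vertex has terminal degree at most one; this is precisely the statement that $\Phi$ is injective, so $|M|\geq \ell$, where $\ell$ denotes the number of end-vertices of $T$. On the other hand, from $\sum_{v\in V(T)}\deg(v)=2(|V(T)|-1)$ one obtains the identity $\ell=2+\sum_{v\in M}(\deg(v)-2)$. Since each major vertex has degree at least three, every summand is at least one, so $\ell\geq 2+|M|\geq 2+\ell$, which is absurd. Hence some major vertex has terminal degree at least two and therefore lies in $\mathcal{M}(T)$, completing the contrapositive.

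I expect the only delicate point to be the verification that the nearest major vertex of an end-vertex is unique, since this is what makes $\Phi$ well defined and identifies the terminal degree of $v$ with $|\Phi^{-1}(v)|$; everything after that is a one-line degree-sum computation. I would also remark that the same identity $\ell=2+\sum_{v\in M}(\deg(v)-2)$ shows $T$ has at least three end-vertices once $M\neq\emptyset$, although this observation is already subsumed by the contradiction above.
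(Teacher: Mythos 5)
Your proof is correct and complete. Note, however, that the paper offers no proof to compare it with: the lemma is introduced by the sentence ``The following result is elementary'' and is stated without argument, so your write-up supplies the omitted proof rather than paralleling an existing one. Both of your ingredients are sound: the pendant-path argument shows that the nearest-major-vertex map $\Phi$ is well defined on end-vertices and that the terminal degree of a major vertex $v$ equals $|\Phi^{-1}(v)|$ (the only unstated micro-step is that the walk from an end-vertex cannot terminate at another end-vertex of $T$, since then $T$ itself would be a path), and the degree-sum identity $\ell=2+\sum_{v\in M}(\deg(v)-2)$ yields the contradiction $\ell\geq 2+|M|\geq 2+\ell$ once $\Phi$ is injective. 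For comparison, the ``elementary'' argument the authors likely had in mind is shorter and constructive: choose a major vertex $v$ such that at most one component of $T\setminus\{v\}$ contains another major vertex (either $v$ is the unique major vertex, or take $v$ to be a leaf of the minimal subtree spanning all major vertices); then at least $\deg(v)-1\geq 2$ components of $T\setminus\{v\}$ consist entirely of vertices of degree at most two, hence are paths whose far ends are terminal vertices of $v$, so $v\in\mathcal{M}(T)$ directly. Your global counting argument is equally valid and proves slightly more (it bounds the total number of end-vertices from below), at the cost of routing the conclusion through a proof by contradiction.
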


%Given a unicyclic graph $G$ and a vertex $v$ let us denote by $G_v$ the connected component of $G\setminus v$ intersecting the cycle.
%Then, the other connected component $T_v^*$ of $G\setminus v$ is a tree (possibly with a single vertex).
%Let $T_v:=G\setminus G_v$ be the subgraph of $G$ induced by $T_v^* \cup \{v\}$, which is also a tree.

Given a unicyclic graph $G$ and a vertex $v$
let us denote $G_v$ the connected component of $G\setminus v$ intersecting the cycle. Then, the complement $T_v:=G\setminus G_v$ is a tree (possibly being the the single vertex $v$).

\begin{theorem} Let $G$ be a unicyclic graph with $n\geq 6$ vertices.
\begin{itemize}
	\item[$(1)$] If $h$ is a strictly increasing function, then
\[
n\,h(1)+\frac{1}{2} n(n-3)\, h(2) \leq W_h(G)\leq F_h(3,n),
%h(1)+\sum_{k=1}^{n-2} (n-k)\,h(k),
\]
the lower bound is attained if and only if $G=J_n$ and the upper bound is attained if and only if $G=G_{3,n}$.

	\item[$(2)$] If $h$ is a strictly decreasing function, then
\[
%h(1)+\sum_{k=1}^{n-2} (n-k)\,h(k)
F_h(3,n) \leq W_h(G)\leq n\,h(1)+\frac{1}{2} n(n-3)\, h(2),
\]
the lower bound is attained if and only if $G=G_{3,n}$ and the upper bound is attained if and only if $G=J_n$.
\end{itemize}
\end{theorem}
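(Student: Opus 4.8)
The plan is to prove the increasing case $(1)$ and to deduce the decreasing case $(2)$ by applying $(1)$ to $-h$, which reverses both inequalities and interchanges the roles of $J_n$ and $G_{3,n}$ (this is exactly the device already used in Lemmas \ref{l:max_even} and \ref{l:max_odd}). So from now on assume $h$ strictly increasing.

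For the \textbf{lower bound} I would argue by counting edges. Since $G$ is unicyclic it has exactly $n$ edges, hence exactly $n$ pairs $\{u,v\}$ with $d(u,v)=1$ and exactly $\binom n2-n=\frac12 n(n-3)$ pairs with $d(u,v)\ge 2$. As $h$ is increasing, $h(d(u,v))\ge h(2)$ on each of the latter pairs, so
\[ W_h(G)\ge n\,h(1)+\tfrac12\,n(n-3)\,h(2)=W_h(J_n). \]
Because $h$ is \emph{strictly} increasing, equality forces $d(u,v)=2$ for every non-adjacent pair, i.e. $\diam(G)\le 2$. The remaining purely combinatorial step is to check that the only unicyclic graph on $n\ge 6$ vertices with $\diam(G)\le 2$ is $J_n$: the cycle $C_r$ inside $G$ satisfies $\diam(C_r)=\lfloor r/2\rfloor$, so $r\in\{3,4,5\}$; attaching any vertex off a $C_4$ or $C_5$, or attaching a vertex at depth $\ge 2$ or at two distinct cycle vertices of a $C_3$, creates a pair at distance $\ge 3$. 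Hence $r=3$ and all $n-3\ge 3$ remaining vertices are leaves at a single cycle vertex, which is exactly $J_n$; the hypothesis $n\ge 6$ is what discards the small cycles $C_4,C_5$ (which already have diameter $2$ but too few vertices).

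For the \textbf{upper bound} the strategy is a reduction by distance-increasing transformations down to the one-parameter family $G_{r,n}$, followed by Lemmas \ref{l:max_even}--\ref{l:max_odd}. The engine is a \emph{path-merging} move: if a vertex $v$ carries two pendant paths of lengths $a\ge b\ge 1$, detach the shorter one $Q=y_1\cdots y_b$ and reattach it at the far endpoint $x_a$ of the longer one, producing $G'$. I would prove that this never decreases $W_h$ for any increasing $h$ by showing that it increases the sorted multiset of pairwise distances in the stochastic-dominance order. Only the pairs $(y_j,\ell)$ with $\ell$ in the complement $L$ of $Q$ change, and for fixed $j$ the new distances are $j+d(x_a,\ell)$ while the old ones are $j+d(v,\ell)$, both measured inside the fixed graph on $L$; since $x_a$ is more peripheral than $v$, the profile from $x_a$ dominates the profile from $v$ (strictly, the cycle providing at least two genuinely farther vertices). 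Summing these dominations over $j$ and over the unchanged pairs gives $W_h(G)\le W_h(G')$, strictly. Iterating such moves, bottom-up inside each attached tree (Lemma \ref{l:major} certifies termination, since a tree with no exterior major vertex of terminal degree $>1$ is a path) and then consolidating the resulting pendant paths onto a single cycle vertex, turns $G$ into $G_{r,n}$ for the \emph{same} $r$ without decreasing $W_h$. Finally Lemmas \ref{l:max_even} and \ref{l:max_odd} give $W_h(G_{r,n})=F_h(r,n)\le F_h(3,n)$ with equality iff $r=3$, and strictness of every genuine move forces equality in $W_h(G)\le F_h(3,n)$ exactly when $G=G_{3,n}$.

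The main obstacle is precisely the correctness of the transformation for every increasing $h$ simultaneously. Pointwise the move both raises and lowers individual distances (the pairs between the two moved halves can get closer), so one cannot argue pair by pair; the statement survives only at the level of the whole distance multiset, which is why I phrase it as stochastic dominance of the two vertex-profiles. The delicate sub-case is consolidating pendant paths hanging from two distinct cycle vertices $v_1,v_2$: one must transfer the path onto the end of the longest existing pendant path, so that the receiving endpoint is genuinely more peripheral than the donor cycle vertex, and then verify dominance uniformly in the cycle length $r$ and in the attachment positions (when no pendant path is yet present the move is a mere relabelling and changes nothing). Establishing this cross-cycle dominance is the part that requires the most care; once it is in place, everything reduces to the already-proved Lemmas \ref{l:max_even} and \ref{l:max_odd}.
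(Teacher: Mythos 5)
Your overall architecture coincides with the paper's: case $(2)$ follows from case $(1)$ applied to $-h$; the lower bound comes from counting the $n$ adjacent pairs against the $\binom{n}{2}-n=\frac12 n(n-3)$ remaining ones, with equality forcing $\diam(G)\le 2$ and hence $G=J_n$ when $n\ge 6$; and the upper bound is proved by distance-increasing surgeries that reduce $G$ to some $G_{r,n}$, after which Lemmas \ref{l:max_even} and \ref{l:max_odd} conclude. Your lower bound and your within-tree merging move are correct, and the profile-dominance formulation you give for the latter (the profile from $x_a$ agrees with the profile from $v$ on $\{v\}\cup P$ and exceeds it by $a$ on everything else) is in fact more rigorous than the paper's one-line assertion that the move increases $W_h$.

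The genuine gap is the step you yourself flag as delicate, the consolidation of pendant paths hanging from two different cycle vertices: it is not only left unproven, but the rule you propose for it (``transfer the path onto the end of the longest existing pendant path'') is false. Take $C_{10}$ with vertices $w_0,\dots,w_9$, attach pendant paths of length $2$ at $w_0$, $w_1$, $w_9$, and a single pendant vertex $x$ at $w_5$; this is exactly a configuration of your consolidation stage (every non-cycle vertex has degree $\le 2$, every cycle vertex degree $\le 3$), and your rule permits moving $x$ onto the endpoint $u^*$ of the path at $w_0$. Over $L=G\setminus\{x\}$ the profile from $w_5$ is $\{0,1,1,2,2,3,3,4,4,5\}\cup\{6,7\}\cup\{5,6\}\cup\{5,6\}$, while the profile from $u^*$ is $\{0,1,2\}\cup\{3,3,4,4,5,5,6,6,7\}\cup\{4,5\}\cup\{4,5\}$: the first has $12$ entries $\le 5$ and $4$ entries $\ge 6$, the second has $13$ entries $\le 5$ and only $3$ entries $\ge 6$, so the receiving endpoint's profile does \emph{not} dominate the donor's. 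Concretely, for the strictly increasing function $h$ with $h(d)=\e d$ for $d\le 6$ and $h(d)=1+\e d$ for $d\ge 7$, with $0<\e<1/4$, this move changes $W_h$ by $(3-4)+\e\,(64-60)=-1+4\e<0$, i.e.\ it strictly \emph{decreases} $W_h$. (This does not contradict the theorem, only your lemma.) The failure mechanism is that ``peripheral'' cannot be read off from path lengths alone: here the rest of the graph (the two paths at $w_1,w_9$) is clustered near $w_0$, so the tip of the $w_0$-path is actually closer to much of the graph than $w_5$ is. This is exactly what the paper's quantities $D_i=\sum_{v\in G\setminus(T_{v_1}\cup T_{v_2})}d(v_i,v)$ are designed to detect: after aligning lengths, the paper always moves the path at the vertex with the \emph{smaller} $D_i$ onto the end of the path at the vertex with the larger $D_i$; on the example above this makes the $w_0$-path the donor rather than the receiver, and those moves do satisfy dominance. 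Until your consolidation step is restated with a donor/receiver choice governed by the position of the rest of the graph (as with the $D_i$) and actually proved, your reduction of an arbitrary unicyclic graph to $G_{r,n}$ --- and with it the upper bound and its equality case --- does not go through.
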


\begin{proof}
The second statement is a consequence of the first one, if we consider the function $-h$.
Hence, we can assume that $h$ is a strictly increasing function.

Since $G$ is a unicyclic graph with $n$ vertices, there are $n$ edges and $n$ pairs of adjacent vertices.
Thus, there are $\binom{n}{2}-n=\frac12 n(n-3)$ pairs of vertices at distance at least 2.
The lower bound is attained if and only if $T$ has diameter 2.
Notice that this means that the cycle has at most 5 vertices and since $n\geq 6$ and any vertex which is not in the cycle is at distance at most 2 from any vertex in it, the cycle is necessarily $C_3$.
Therefore, the lower bound is attained if and only if $G=J_n$.

Assume that $G$ is a unicyclic graph with $n$ vertices such that $W_h(G)$ is maximal and let $C$ be the cycle.

Seeking for a contradiction, consider any vertex $v\in G$ such that $v$ is not in the unique cycle and $deg(v)\geq 3$ or $v$ is in the cycle and $deg(v)\geq 4$. Then,  there is an exterior major vertex $w$ in $T_v$ (where possibly $v=w$). Hence, consider two terminal vertices of $w$, $u_1,u_2$ with $d(w,u_1)=k$. Let us define new vertices $\{v_1,\dots, v_k\}$ and let
$$
G':=\big(G\setminus [w,u_1]\big)\cup u_2v_1\cup v_1v_2\cup \cdots \cup v_{k-1}v_k .
$$
%Then, notice that the subgraph $[u_1,u_2]$ in $G$ and the subgraph $[w,v_k]$ in $G'$ are both paths with the same length.
%Hence, the restriction of $W_h$ in these subgraphs is equal.
Thus, it is immediate to see that, since $h$ is strictly increasing, $W_h(G')>W_h(G)$ leading to contradiction.
Therefore, $deg(v)\leq 2$ for every vertex $v\notin C$, and $deg(v)\leq 3$ for every vertex $v\in C$.

Suppose there exist $v_1,v_2\in C$ with $deg(v_i)=3$ for $i=1,2$. Then, $T_{v_i}$ is a path $[v_i,u_i]$ with length $d([v_i,u_i])=l_i$. Let $D_i:=\sum_{v\in G\setminus \{T_{v_1}\cup T_{v_2}\}}d(v_i,v)$ for $i=1,2$. If $D_i<D_j$ and $l_i>l_j$, consider the vertex $w\in [v_i,u_i]$ such that $d(v_i,w)=l_j$, let $s=l_i-l_j$ and let us define new vertices $\{w_1,\dots,w_{s}\}$. Then, let
$$
G':=\big(G\setminus [w,u_i]\big)\cup u_jw_1\cup w_1w_2\cup \cdots \cup w_{s-1}w_{s} .
$$
Since $D_i<D_j$ and $h$ is strictly increasing, it follows that $W_h(G')>W_h(G)$ leading to contradiction. Thus, we may assume, relabeling if necessary, that $D_1\leq D_2$ and $l_1\leq l_2$.
Hence, let us define new vertices $\{a_1,\dots,a_{l_1}\}$ and let
$$
G':=\big(G\setminus [v_1,u_1]\big)\cup u_2a_1\cup a_1a_2\cup \cdots \cup a_{l_1-1}a_{l_1} .
$$
Since $D_1\leq D_2$, $l_1\leq l_2$ and $h$ is strictly increasing, it follows that $W_h(G')>W_h(G)$ leading to contradiction.

Therefore, $G$ is a unicyclic graph with at most one vertex with degree 3 and $G=G_{r,n}$ where $r$ is the length of the cycle. By lemmas \ref{l:max_even} and \ref{l:max_odd}, $G=G_{3,n}$.
\end{proof}

\begin{corollary} If $G$ is a unicyclic graph with $n\geq 6$ vertices, then
\[
n(n-2) \leq W(G)\leq 1+\sum_{k=1}^{n-2} (n-k)\,k=\frac{n^3-7n+12}{6},
\]
the lower bound is attained if and only if $G=J_n$ and the upper bound is attained if and only if $G=G_{3,n}$.
\end{corollary}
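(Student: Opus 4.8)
The plan is to deduce this as an immediate specialization of the preceding Theorem. The ordinary Wiener index is precisely $W_h$ for the choice $h(k)=k$, since $W_h(G)=\sum_{\{u,v\}\subseteq V(G)}h(d(u,v))=\sum_{\{u,v\}\subseteq V(G)}d(u,v)=W(G)$. As $h(k)=k$ is strictly increasing on $\ZZ^+$, part $(1)$ of the Theorem applies directly, so both the inequalities and the characterization of the extremal graphs ($J_n$ for the minimum, $G_{3,n}$ for the maximum) are inherited at once. It therefore only remains to evaluate the two abstract bounds for this particular $h$.

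For the lower bound I would substitute $h(1)=1$ and $h(2)=2$ into $n\,h(1)+\tfrac12 n(n-3)h(2)$, obtaining $n+n(n-3)=n(n-2)$, which is exactly the claimed left-hand side.

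For the upper bound I would use the expression for $F_h(3,n)$ established earlier, namely $F_h(3,n)=n\,h(1)+\sum_{j=2}^{n-2}(n-j)h(j)$. With $h(j)=j$ this becomes $n+\sum_{j=2}^{n-2}(n-j)j$. Splitting off the $k=1$ term $(n-1)\cdot 1$ from $\sum_{k=1}^{n-2}(n-k)k$ shows that $n+\sum_{j=2}^{n-2}(n-j)j=1+\sum_{k=1}^{n-2}(n-k)k$, which matches the middle member of the claimed chain. To reach the closed form, I would then write $\sum_{k=1}^{n-2}(n-k)k=n\sum_{k=1}^{n-2}k-\sum_{k=1}^{n-2}k^2$ and insert the standard identities $\sum_{k=1}^{m}k=\tfrac{m(m+1)}{2}$ and $\sum_{k=1}^{m}k^2=\tfrac{m(m+1)(2m+1)}{6}$ with $m=n-2$. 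Factoring out $\tfrac{(n-2)(n-1)}{6}$ collapses the difference to $\tfrac{(n-2)(n-1)(n+3)}{6}=\tfrac{n^3-7n+6}{6}$, and adding the isolated $1$ yields $\tfrac{n^3-7n+12}{6}$.

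Conceptually there is no obstacle: the entire content is already contained in the Theorem, and the only thing to verify is that $h(k)=k$ satisfies its hypothesis of strict monotonicity, which is obvious. The sole point demanding care is the final arithmetic simplification to the polynomial closed form, where one must keep the summation bounds and the extra constant term straight so that the telescoped middle expression is correctly converted.
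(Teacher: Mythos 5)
Your proposal is correct and follows exactly the route the paper intends: the corollary is stated without proof precisely because it is the specialization $h(k)=k$ of part $(1)$ of the Theorem, which is what you carry out. Your evaluation of the two bounds, including the identity $n+\sum_{j=2}^{n-2}(n-j)j=1+\sum_{k=1}^{n-2}(n-k)k$ and the simplification to $\frac{n^3-7n+12}{6}$, is accurate.
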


\end{document}